\newtheorem{thm}{Theorem}[section]
\newtheorem{defn}{Definition}[section]
\newtheorem{prop}{Proposition}[section]
\newtheorem{lem}{Lemma}[section]
\newtheorem{rem}{Remark}[section]
\title{Decomposition Of Invertible And Conformal Transformations}
\author{Srikanth K.V. and Raj Bhawan Yadav
 \footnote{Dept. of Mathematics, IIT Guwahati, Assam 781039, INDIA} }
\begin{document}
\maketitle{}
\begin{abstract}
In this article, we give  a geometric description for any invertible operator on a finite dimensional  inner--product space. With the aid of such a description, we are able to decompose  any given conformal transformation as a  product of planar rotations, a planar rotation or  reflection and a scalar transformation. Also,  we are able to conclude that an orthogonal transformation is a product of planar rotations and a planar rotation or a reflection.
\end{abstract}

\section{Introduction}
A classical theorem on orthogonal operators on a finite dimensional inner--product space decomposes the given operator into planar rotations and possibly a planar reflection\cite{Decm05, Kost. and Manin}. The standard proof of this theorem relies on the fundamental theorem of algebra applied to the characteristic polynomial of the  complexified  operator. Such a proof lacks geometric intuition regarding these planar rotations or reflections which arise in the decomposition of an orthogonal operator. One of the aims of this article is to provide a more geometrically inclined proof of this theorem.
In Section \ref{sec:prelim} we fix notation and recall a few elementary definitions.  Section \ref{sec:axis} introduces elementary but novel concepts such as  an \textit{axial--vector} and the \textit{axis} of a basis of a finite dimensional real inner--product space.  In Section \ref{sec:invertible} we identify a class termed   \textit{axonal} operators. Theorem
\ref{thm:decomposelinear} gives a geometric description of invertible operators using axonal operators. Finally, in Section \ref{sec:decompose}, Theorem \ref{thm:decomposeconformal}  gives a decomposition  of  conformal operators into planar rotations followed possibly by a reflection and a scalar transformation. A similar theorem for orthogonal operators is noted in Theorem \ref{thm:decomposeorthogonal}.

\section{Preliminaries}\label{sec:prelim}

Throughout this article, $V$ denotes a finite dimensional inner-- product space of dimension $n\in\mathbb{N}$ over the field of real numbers $\mathbb{R}$. The set of all invertible operators on $V$ shall be denoted by GL($V$).

\begin{defn}
Let $W$ be a two dimensional subspace of $V$. Suppose  that $\{u, v \}\subset V$ is an orthonormal basis of $W$. For any fixed real number $\theta$, the assignments \\
\hspace*{\stretch{1}}$u\mapsto \left(\cos\theta \right)u+\left(\sin\theta\right) v$ and $v\mapsto \left(-\sin\theta\right) u+\left(\cos\theta\right) v$ \hspace*{\stretch{1}}\\determine a unique linear operator on  $W$ termed as a \textbf{rotation} of $W$.

Similarly the assignments \\
\hspace*{\stretch{1}}$u\mapsto u$ and $v\mapsto -v$ \hspace*{\stretch{1}}\\determine a unique linear operator on  $W$ termed as a \textbf{reflection} of $W$.
\end{defn}
\begin{defn}
Let $T$ be a linear operator on $V$ of the form $\rho\oplus id$ where $\rho$ is an operator on a two dimensional subspace $W$ of $V$ and $id$ is the identity operator on $W^\perp$. We say that $T$ is a \textbf{planar rotation} or a \textbf{planar reflection} accordingly as $\rho$ is a rotation or a reflection of $W$. A planar rotation and a planar reflection are also termed as \textbf{rotational} and \textbf{reflectional} operators respectively.
\end{defn}
\begin{rem}
When $V$ is uni--dimensional, we allow the identity operator on $V$ to be termed a rotational operator.
\end{rem}
\section{Axis Of A Basis}\label{sec:axis}
\begin{defn}
A basis $\{u_i\}_{i=1}^n$ of the inner--product space V is
\textbf{equimodular} if  there exists a real  $\delta > 0$ such
that $\|u_i\|=\delta$,  for each $i$ in $\{1,2,\ldots , n\}$. When such a $\delta=1$, we say that the basis is \textbf{unimodular}.
\end{defn}

\begin{defn} Let $\alpha$ be a non--zero vector in an inner-- product space $V$ and $\{u_i\}_{i=1}^n$ be a basis of  $V$.  If $$\langle u_i,\alpha\rangle \langle u_j, u_j \rangle^\frac{1}{2} = \langle u_j,\alpha\rangle \langle u_i, u_i \rangle^\frac{1}{2}\;\mathrm{ for\; all  }\; i,j \in \{1,2,\ldots , n\},$$ we say that  $\alpha$ is an \textbf{axial--vector} of
the given basis $\{u_i\}_{i=1}^n$.
\end{defn}
\begin{rem}\label{lem:existaxial}
Let $\alpha$ be an axial--vector of a basis $\{u_i\}_{i=1}^n$ of an inner--product space $V$.
\begin{enumerate}
\item Clearly, the ratio $\frac{\left< u_i, \alpha \right>}{\|u_i\| \|\alpha\|}$ is independent of the choice of $i\in \{1,2,\ldots , n\}$. Thus, the axial--vector $\alpha$ makes the same angle with each of the basis vectors $u_i$ for $i \in \{1,2,\ldots , n\}$.
\item When $n=1$,   $\frac{\langle u_1,\alpha \rangle}{\|u_1\|\|\alpha\|}$ is  either $-1$ or $1$.
\item However, when $n\ge 2$,  if $\frac{\langle u_i,\alpha \rangle}{\|u_i\|\|\alpha\|}$ is either  $-1$, $0$ or $+1$, it forces $\{u_i\}_{i=1}^n$ to be linearly dependent.  In other words, the common  angle between an axial vector and the basis vectors can not be $0$,  $\frac{\pi}{2 }$ or $\pi$.
\end{enumerate}
\end{rem}
\begin{lem}
Every  equimodular basis of  a finite dimensional  inner--product space has an axial--vector. Further,
any two axial--vectors
of a given  equimodular basis  are linearly dependent.


\end{lem}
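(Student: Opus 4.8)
The plan is to first exploit the equimodular hypothesis to drastically simplify the defining condition of an axial--vector. Since $\|u_i\| = \langle u_i, u_i\rangle^{1/2} = \delta$ for every $i$, the defining equation $\langle u_i, \alpha\rangle \langle u_j, u_j\rangle^{1/2} = \langle u_j, \alpha\rangle \langle u_i, u_i\rangle^{1/2}$ becomes $\delta \langle u_i, \alpha\rangle = \delta \langle u_j, \alpha\rangle$, and cancelling $\delta > 0$ shows that $\alpha$ is an axial--vector precisely when the scalar $\langle u_i, \alpha\rangle$ does not depend on $i$. Thus the whole lemma reduces to finding a non--zero vector whose inner products against all the basis vectors coincide, and to showing that such a vector is unique up to scaling.

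For existence, I would consider the linear map $\Phi : V \to \mathbb{R}^n$ given by $\Phi(\alpha) = (\langle u_1, \alpha\rangle, \ldots, \langle u_n, \alpha\rangle)$. Because $\{u_i\}_{i=1}^n$ is a basis and the inner product is non--degenerate, the coordinate functionals $\alpha \mapsto \langle u_i, \alpha\rangle$ are linearly independent, so $\Phi$ is a linear isomorphism. I would then take $\alpha$ to be the unique preimage $\Phi^{-1}(1, 1, \ldots, 1)$; by the reduction above this $\alpha$ is an axial--vector, and it is non--zero because its image $(1, \ldots, 1)$ under the isomorphism $\Phi$ is non--zero.

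For the uniqueness assertion, suppose $\alpha$ and $\beta$ are both axial--vectors, so that $\langle u_i, \alpha\rangle = c$ and $\langle u_i, \beta\rangle = d$ for all $i$, with constants $c$ and $d$. I would first note that $c \neq 0$: were $c = 0$, the vector $\alpha$ would be orthogonal to every basis vector and hence to all of $V$, forcing $\alpha = 0$ by non--degeneracy and contradicting that an axial--vector is non--zero; the same reasoning gives $d \neq 0$. Then the vector $d\alpha - c\beta$ satisfies $\langle u_i, d\alpha - c\beta\rangle = dc - cd = 0$ for every $i$, so it is orthogonal to the whole basis and therefore vanishes. Hence $d\alpha = c\beta$ with $c, d \neq 0$, which exhibits $\alpha$ and $\beta$ as linearly dependent.

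I expect the only point requiring genuine care to be the repeated appeal to non--degeneracy of the inner product: it is exactly what guarantees both that $\Phi$ is an isomorphism (so existence holds and the produced $\alpha$ is genuinely non--zero) and that a vector orthogonal to the entire basis must vanish (so the constants $c, d$ are non--zero and the dependence argument closes). Everything else is a direct computation once the equimodular condition has been reduced to the statement that the inner products $\langle u_i, \alpha\rangle$ are constant in $i$.
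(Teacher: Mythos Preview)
Your proof is correct and follows essentially the same strategy as the paper: both arguments reduce the axial--vector condition for an equimodular basis to the requirement that $\langle u_i,\alpha\rangle$ be constant in $i$, and then establish that the linear map $\alpha\mapsto(\langle u_1,\alpha\rangle,\ldots,\langle u_n,\alpha\rangle)$ is an isomorphism---the paper does this by showing the Gram matrix $A_{ij}=\langle u_i,u_j\rangle$ is invertible via positive definiteness, while you phrase the same fact as non--degeneracy of the inner product. The uniqueness arguments are likewise the same idea in different clothing: the paper observes that $A^{-1}$ carries the parallel vectors $\Omega,\tilde\Omega$ to parallel coordinate vectors, whereas you note that $d\alpha-c\beta$ lies in the kernel of the isomorphism and hence vanishes.
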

\begin{proof}
Suppose $\{u_i\}_{i=1}^n$ is an equimodular basis of $V$. Given
a non--zero real $\omega$, we prove the existence of a non--zero $\alpha \in V$ such that $\langle u_i,
\alpha\rangle = \omega$ for all $i \in \{1,2,\ldots , n\}$.  Such an $\alpha$ would be an axial--vector of the given equimodular basis.

Let $A$ be the $n\times n$ matrix  with $A_{ij}=\langle u_i,u_j\rangle $ for $i, j \in \{1,2,\ldots , n\}$.  Let $\alpha =\sum_{i=1}^nx_i u_i$ for undetermined real numbers $\{x_i\}_{i=1}^n$.
Further, let $X$ denote the column vector $(x_1,x_2,...,x_n)^\mathrm{T}$.
Then for $i \in \{1,2,\ldots , n\}$,  the collection of  $n$ equations $\langle u_i, \alpha \rangle=\langle u_i, \sum_{j=1}^nx_ju_j\rangle=\omega$ is the system $AX = \Omega$, where
$\Omega$ is the column vector $(\omega, \omega, \ldots ,\omega)^T$. Existence of a solution $X$ to the latter system suffices to prove the existence of $\alpha$.  In fact, we show that for each $\Omega$ there is a unique solution $X$ by proving that $A$
is invertible.

Suppose to the contrary that $A$ is not invertible. Then, there exists a non--zero column vector  $Y=(y_1, y_2, \ldots, y_n)^\mathrm{T}$ such that $AY=0$. Set
$s=\sum_{i=1}^n y_iu_i$. Then, $s\neq 0$ and consequently $\|s\|\neq 0$. However,  $\|s\|^2=\langle \sum_{i=1}^n y_iu_i,
\sum_{j=1}^n y_ju_j\rangle
=Y^TAY =0$, a contradiction.

If $\omega$ is non--zero, it is evident from $X = A^{-1}\Omega$ that $X\neq 0$ and hence
$\alpha$ is non--zero. This proves the existence of an axial vector for the given basis.

Suppose that $\alpha$ and $\tilde{\alpha}$ are two axial--vectors  of a given equimodular basis $\{u_i\}_{i=1}^n$ of $V$.   Let $\alpha= \sum_{i=1}^n x_i u_i$ and $\tilde{\alpha}= \sum_{i=1}^n \tilde{x}_i u_i$ for some real numbers $\{x_i\}_{i=1}^n$ and $\{\tilde{x}_i\}_{i=1}^n$. Set $X=(x_1,x_2,\ldots, x_n)^T$ and $\tilde{X}=(\tilde{x}_1,\tilde{x}_2,\ldots, \tilde{x}_n)^T$. Corresponding to the axial vectors $\alpha$ and $\tilde{\alpha}$, there  are two non--zero real numbers $\omega$ and $\tilde{\omega}$ such that   $\langle u_i, \alpha\rangle = \omega$ and   $\langle u_i, \tilde{\alpha}\rangle = \tilde{\omega}$ for all $i\in\{1,2,\ldots , n\}$. These two sets of $n$ equations are the two systems $AX=\Omega$ and $A\tilde{X}=\tilde{\Omega}$ where $\Omega=(\omega,\omega,\ldots,\omega)^T$ and $\tilde{\Omega}=(\tilde{\omega},\tilde{\omega},\ldots,\tilde{\omega})^T$. Clearly  the column vectors $\Omega$ and $ \tilde{\Omega}$ are linearly dependent. Hence the
corresponding solutions $X=A^{-1}\Omega$ and $ \tilde{X}=A^{-1}\tilde{\Omega}$ are linearly dependent. We can now conclude that the axial--vectors $\alpha$ and $\tilde{\alpha}$ are linearly dependent.
\end{proof}

\begin{thm}\label{thm:existaxial}
Every basis of a finite dimensional inner--product space has an axial--vector. Any two axial--vectors
of a given basis are
linearly dependent.
\end{thm}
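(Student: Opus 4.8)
The plan is to reduce the statement to the equimodular case already settled in the preceding Lemma. Given an arbitrary basis $\{u_i\}_{i=1}^n$ of $V$, I would first normalize it: set $v_i = u_i/\|u_i\|$ for each $i$, which is legitimate since each $u_i$, being a member of a basis, is non--zero. The resulting collection $\{v_i\}_{i=1}^n$ is again a basis of $V$, and it is unimodular since $\|v_i\|=1$ for every $i$; in particular it is equimodular, so the Lemma applies to it.

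The crucial observation is that a non--zero vector $\alpha$ is an axial--vector of $\{u_i\}_{i=1}^n$ if and only if it is an axial--vector of the normalized basis $\{v_i\}_{i=1}^n$. To see this, note that $\langle u_i,\alpha\rangle = \|u_i\|\,\langle v_i,\alpha\rangle$ for each $i$, and that $\langle u_j,u_j\rangle^{1/2}=\|u_j\|$. Dividing the defining relation $\langle u_i,\alpha\rangle\,\|u_j\| = \langle u_j,\alpha\rangle\,\|u_i\|$ throughout by the non--zero scalar $\|u_i\|\,\|u_j\|$ shows that it is equivalent to
\[
\langle v_i,\alpha\rangle = \langle v_j,\alpha\rangle \quad\text{for all } i,j \in \{1,2,\ldots,n\},
\]
which, because $\|v_i\|=\|v_j\|=1$, is precisely the condition that $\alpha$ be an axial--vector of $\{v_i\}_{i=1}^n$.

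With this equivalence in hand, both assertions follow at once from the Lemma applied to the equimodular basis $\{v_i\}_{i=1}^n$. For existence, the Lemma furnishes an axial--vector $\alpha$ of $\{v_i\}_{i=1}^n$, and the equivalence above then makes $\alpha$ an axial--vector of $\{u_i\}_{i=1}^n$. For the second assertion, if $\alpha$ and $\tilde{\alpha}$ are any two axial--vectors of $\{u_i\}_{i=1}^n$, the equivalence shows that each is an axial--vector of $\{v_i\}_{i=1}^n$, whence they are linearly dependent by the Lemma. I do not anticipate a genuine obstacle here; the only points requiring care are the non--vanishing of each $\|u_i\|$, which legitimizes both the normalization and the division above, and the verification that passage to $\{v_i\}_{i=1}^n$ preserves the axial--vector condition in \emph{both} directions so that the reduction is faithful.
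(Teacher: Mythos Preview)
Your proof is correct and follows essentially the same route as the paper: normalize the given basis to a unimodular one, observe that the axial--vector condition for the original basis is equivalent to that for the normalized basis, and then invoke the preceding Lemma for both existence and linear dependence. The only cosmetic difference is that you state the equivalence as an explicit ``if and only if'' up front, whereas the paper uses one direction for existence and the other for uniqueness without isolating the biconditional; the content is the same.
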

\begin{proof}
If $\{v_i\}_{i=1}^n$ is a basis of $V$, then the collection $u_i :=
\frac{v_i}{\|v_i\|}$ is an equimodular basis. The
latter has an axial--vector $\alpha$ by Lemma \ref{lem:existaxial}. Consequently $ \langle u_i, \alpha \rangle =\langle u_j, \alpha \rangle$ for all $i, j\in \{1,2,\ldots , n\}$. Substituting for $u_i$, we get that $ \langle v_i, \alpha \rangle \langle v_j, v_j \rangle  =\langle v_j, \alpha \rangle \langle v_i, v_i \rangle $ for all $i, j\in \{1,2,\ldots , n\}$. Thus,
$\alpha$ is an axial--vector of the given
basis $\{v_i\}_{i=1}^n$.

Further, if $\alpha$ and  $\tilde{\alpha}$ are two axial--vectors of the given basis
$\{v_i\}_{i=1}^n$, then $\alpha$ and $\tilde{\alpha}$ are
also axial--vectors of the equimodular $\{u_i\}_{i=1}^n$ and again by Lemma \ref{lem:existaxial} are linearly
dependent.
\end{proof}
Theorem \ref{thm:existaxial} ensures  the existence and uniqueness of the
$\textbf{axis}$ of a basis defined below.

\begin{defn}
Let $\alpha$ be an axial--vector of a basis $\mathcal{B}$ of  a finite dimensional inner--product space. The
$\textbf{axis}$ of $\mathcal{B}$ is defined to be the span of $\{\alpha\}$.
\end{defn}

\begin{defn}
Given a non--zero $\alpha\in V$ and a real number $\theta \in [0,\pi]$, we define the \textbf{cone} around $\alpha$ of \textbf{vertex angle} $\theta$ by
$$ \Lambda_\alpha^\theta = \left\{x\in V|\langle x,\alpha\rangle = \|x\|\|\alpha \|\cos \theta   \right\}.$$
The span of $\{\alpha \}$ shall be termed as the \textbf{axis} of the cone $\Lambda_\alpha^\theta$.
 \end{defn}

 \begin{rem}
 \begin{enumerate}
 \item When $V$ is one dimensional, the cone around any non--zero vector is empty if the angle $\theta \neq 0,\pi$.
 \item When the dimension of $V$ is at least two, every such cone $\Lambda_\alpha^\theta$ is non--empty.
 \item From Theorem \ref{thm:existaxial}, if $\mathcal{B}$ is any basis of $V$, there exists a cone in $V$ whose vertex is the zero vector, whose axis is the axis of the basis and whose vertex angle is the common angle between each of the basis elements and an axial--vector of  $\mathcal{B}$. Such a cone is termed as the \textbf{associated cone} of basis $\mathcal{B}$.
 \end{enumerate}
 \end{rem}
\section{Geometric Description Of Invertible Operators}
\label{sec:invertible}

\begin{defn}
An invertible linear operator $T:V\rightarrow V$  is called  \textbf{axonal } if it maps
an equimodular basis $\mathcal{B}$ to an equimodular basis
$\mathcal{B}^{'}$ such that   $\mathcal{B}$ and  $\mathcal{B}^{'}$ share a common axis.
\end{defn}
We denote the set of all axonal operators on $V$ by AX($V$). The latter is a subset of GL($V$).

\begin{rem}
\begin{enumerate}
\item It would be of interest to know examples of axonal operators on $V$. Indeed, given any two equimodular bases sharing a common axis, we have an example of an axonal operator which would map one of these bases to the other.
\item Axonal linear operators which map an equimodular basis $\mathcal{B}$ to $\mathcal{B'}$ may be classified into two kinds: those for which the associated cones of $\mathcal{B}$ and $\mathcal{B'}$ are same and those for which the associated cones are distinct. Proposition~\ref{prop:axonalexample} below provides examples of axonal operators of the latter kind.
\end{enumerate}
\end{rem}
\begin{prop}\label{prop:axonalexample}
Suppose that $V$ is an inner--product space of dimension at least two. Let $\{u_i\}_{i=1}^n$ be a basis of $V$ with $\alpha$ as its axial--vector. Assume that for each $i \in \{1,2,\ldots , n\}$,  $u_i$ is rotated in the space spanned by $u_i$ and  $\alpha$ to get $v_i$   such that each $v_i$ makes the same angle $\phi$ with $\alpha$. If $\phi \not\in \{0,\frac{1}{2}\pi, \pi \}$, then $\{v_i\}_{i=1}^n$ is a basis of $V$.
\end{prop}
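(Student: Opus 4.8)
The plan is to express each rotated vector $v_i$ explicitly in the original basis $\{u_i\}_{i=1}^n$ and then to show that the resulting change--of--basis matrix is invertible. First I would normalize the axial--vector, writing $\hat\alpha=\alpha/\|\alpha\|$, and let $\theta$ denote the common angle that $\alpha$ makes with every $u_i$, which exists by the defining property of an axial--vector. By the third item of Remark~\ref{lem:existaxial}, since $n\ge 2$ and $\{u_i\}$ is linearly independent, $\theta\notin\{0,\tfrac{\pi}{2},\pi\}$, so both $\sin\theta$ and $\cos\theta$ are nonzero. Decomposing each basis vector along and orthogonal to the axis then gives $u_i=\|u_i\|\cos\theta\,\hat\alpha+w_i$ with $w_i\perp\hat\alpha$ and $\|w_i\|=\|u_i\|\sin\theta$.

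Next I would carry out the rotation inside $\mathrm{span}(u_i,\alpha)=\mathrm{span}(\hat\alpha,w_i)$. Since a planar rotation preserves length, $\|v_i\|=\|u_i\|$, and the requirement that $v_i$ make angle $\phi$ with $\alpha$ forces $v_i=\|u_i\|\cos\phi\,\hat\alpha\pm\tfrac{\sin\phi}{\sin\theta}\,w_i$, the sign depending only on the sense of rotation and, as will be seen, not affecting the conclusion. Substituting $w_i=u_i-\|u_i\|\cos\theta\,\hat\alpha$ and using the identity for $\sin(\theta\mp\phi)$, this collapses to the clean form $v_i=a\,u_i+b_i\,\hat\alpha$, where $a=\pm\tfrac{\sin\phi}{\sin\theta}$ is independent of $i$ and $b_i=\|u_i\|\,\tfrac{\sin(\theta\mp\phi)}{\sin\theta}$.

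Finally, writing $\hat\alpha=\sum_j p_j u_j$ in the basis, the matrix $M$ defined by $v_i=\sum_j M_{ij}u_j$ becomes the rank--one perturbation $M=aI+b\,p^{\mathrm{T}}$ of a scalar matrix, with $b=(b_1,\dots,b_n)^{\mathrm{T}}$ and $p=(p_1,\dots,p_n)^{\mathrm{T}}$. I would then apply the matrix determinant lemma to obtain $\det M=a^{\,n-1}(a+p^{\mathrm{T}}b)$. To evaluate $p^{\mathrm{T}}b$ I would use $\langle u_j,\hat\alpha\rangle=\|u_j\|\cos\theta$ together with $\|\hat\alpha\|^2=1=\sum_j p_j\|u_j\|\cos\theta$, which gives $\sum_j p_j\|u_j\|=1/\cos\theta$. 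A short trigonometric simplification then reduces $a+p^{\mathrm{T}}b$ to exactly $\cos\phi/\cos\theta$, so that $\det M=\pm\bigl(\sin\phi/\sin\theta\bigr)^{n-1}\bigl(\cos\phi/\cos\theta\bigr)$.

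The conclusion now falls out: $\det M\neq 0$ precisely when $\sin\phi\neq 0$ and $\cos\phi\neq 0$, that is, when $\phi\notin\{0,\tfrac{\pi}{2},\pi\}$, and under this hypothesis $\{v_i\}_{i=1}^n$ is therefore a basis. I expect the main obstacle to be not any single computation but recognizing that $M$ has rank--one structure relative to the basis $\{u_i\}$; once this is seen, the matrix determinant lemma and a single trigonometric identity do all the work, and the emergent factor $\cos\phi/\cos\theta$ transparently explains why the value $\phi=\tfrac{\pi}{2}$ (and likewise $0$ and $\pi$) must be excluded.
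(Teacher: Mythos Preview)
Your argument is correct, and it takes a genuinely different route from the paper's. The paper first reduces to the equimodular case, writes $u_i=s+w_i$ and $v_i=ks+hw_i$ with a single pair of nonzero scalars $h,k$, and then argues linear independence directly: from $\sum_i\lambda_i v_i=0$ it separates the $S$ and $S^\perp$ components to obtain $\bigl(\sum_i\lambda_i\bigr)s=0$ and $\sum_i\lambda_i w_i=0$, adds these to recover $\sum_i\lambda_i u_i=0$, and concludes. No matrices or determinants enter. Your approach instead keeps the general (not necessarily equimodular) basis, writes the transition matrix as a rank--one perturbation $M=aI+bp^{\mathrm{T}}$, and evaluates $\det M$ via the matrix determinant lemma; the trigonometric reduction to $\det M=\pm(\sin\phi/\sin\theta)^{\,n-1}(\cos\phi/\cos\theta)$ is clean and checks out. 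The paper's proof is shorter and more elementary, but yours gives strictly more information: an explicit closed form for the determinant that makes the necessity of excluding $\phi\in\{0,\tfrac{\pi}{2},\pi\}$ visible at a glance. Both proofs tacitly assume that the sense of rotation is chosen consistently across all $i$ (so that a single sign, respectively a single pair $(h,k)$, works), which is how the proposition is evidently intended.
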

\begin{proof}
Without any loss of generality assume that $\{u_i\}_{i=1}^n$ is an equimodular basis. Let $S$ be the span of $\{\alpha\}$.  Since each of the $u_i's$ have the same norm and make the same angle with $\alpha$,  they have the same component, say $s$, in $S$.  For each $i\in\{1,2,\ldots , n\}$, orthogonally decompose the two collections of vectors to get
\begin{align}
u_i&=s+w_i, \text{where }s\in S \text{ and }  w_i\in S^\perp    \\
v_i&=ks+hw_i, \text{ for some real numbers } h,k \neq 0.
\end{align}
We note that if either $h$ or $k$ equals 0, angle $\phi\in\{0,\frac{1}{2}\pi, \pi\}$, contradicting hypothesis.

Next, assume a linear relation of the form $\sum_{i=1}^n \lambda_i v_i = 0$ for some real numbers $\{\lambda_i\}_{i=1}^n$. Using Eq. (2) from above, we get
\begin{align*}
k\left(\sum_{i=1}^n\lambda_i\right)s+h\left(\sum_{i=1}^n\lambda_i w_i\right)=0.
\end{align*}
The summands are in orthogonal complements $S$ and $S^\perp$ while $h, k \neq 0$. Hence we conclude
\begin{align*}
\left(\sum_{i=1}^n\lambda_i\right)s=0\quad \text{and}\quad \sum_{i=1}^n\lambda_i w_i=0.\end{align*}
Adding the two equalities, we get $\sum_{i=1}^n\lambda_i \left(s+w_i\right)=0$ and hence $\sum_{i=1}^n\lambda_i u_i=0$. From the linear independence of $\{u_i\}_{i=1}^n$, we conclude $\lambda_i=0$ for all $i\in\{1,2,\ldots , n\}$. This proves that $\{v_i\}_{i=1}^n$ are linearly independent and hence form a  basis of $V$.
\end{proof}

\begin{defn}
Let  $k$ be a  positive integer with $k\le n$. A linear transformation $S:V\rightarrow V$ is called a \textbf{$k$--shear} if there exists a $k$--dimensional subspace $W$ of $V$  such that the restriction $S_{|W^\perp}$ is the identity while $S_{|W}$ is an axonal transformation which maps an equimodular  basis $\{u_i\}_{i=1}^k$ to an equimodular  basis $\{v_i\}_{i=1}^k$ of $W$ with the same axis   so  that each $v_i$ is obtained by rotating $u_i$ by the same angle $\theta$ in the two dimensional subspace  containing $u_i$ and the axis of  $\{u_i\}_{i=1}^k$ .
\end{defn}
\begin{rem}\label{rem:axonalshear}
Any axonal transformation $A:V\rightarrow V$ can be written as $A=A'\circ S$, where $S$ is a $n$-shear and  $A'$ is an axonal transformation which maps an equimodular basis to another equimodular basis such  that the two bases have the same associated cones.
\end{rem}
\begin {thm}\label{thm:decomposelinear}
Let $T:V\rightarrow V$ be any invertible linear operator on $V$. Then, there exist a diagonal operator  $D$, an axonal operator $A$ and a rotational operator $R$ on $V$ such that $T=D\circ A\circ R$.
\end{thm}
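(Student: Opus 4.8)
The plan is to reduce $T$ to an axonal operator by first rescaling the image of a fixed orthonormal basis and then aligning two axes with a single rotational operator. I would fix an orthonormal basis $\{e_i\}_{i=1}^n$ of $V$; it is unimodular, so by Theorem~\ref{thm:existaxial} it possesses an axis $L$. Since $T$ is invertible, $\{f_i := Te_i\}_{i=1}^n$ is again a basis, with its own axis $M = \mathrm{span}\{\beta\}$, where $\beta$ is an axial--vector. The first step I would carry out is to observe that normalization does not disturb this axis: dividing the defining identity $\langle f_i,\beta\rangle\|f_j\| = \langle f_j,\beta\rangle\|f_i\|$ by $\|f_i\|\|f_j\|$ shows that $\langle f_i/\|f_i\|,\beta\rangle$ is independent of $i$, so the unimodular basis $\{\hat f_i := f_i/\|f_i\|\}_{i=1}^n$ again has axis $M$.

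Next I would align the two axes. As $L$ and $M$ are one--dimensional, they lie in a common subspace of dimension at most two, so there is a rotational operator $R$ (a planar rotation) carrying $L$ onto $M$, with $R = \mathrm{id}$ when $L = M$. Because $R$ is orthogonal it preserves norms and inner products, hence $\{Re_i\}_{i=1}^n$ is unimodular and its axis is $R(L) = M$. At this stage $\{Re_i\}_{i=1}^n$ and $\{\hat f_i\}_{i=1}^n$ are two unimodular bases of $V$ sharing the single axis $M$.

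The remaining two factors are now essentially forced. By the remark following the definition of an axonal operator, any two equimodular bases with a common axis are related by an axonal operator; let $A$ be the axonal operator determined by $A(Re_i) = \hat f_i$ for each $i$. Finally let $D$ be the operator that is diagonal in the basis $\{\hat f_i\}_{i=1}^n$, acting by $D\hat f_i = \|f_i\|\,\hat f_i = f_i$. Chasing a basis vector, $D A R\, e_i = D A (Re_i) = D\hat f_i = f_i = Te_i$, and since the three factors agree with $T$ on a basis I conclude $T = D\circ A\circ R$.

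The conceptual heart of the argument, and the point I would be most careful about, is that a single rotational operator suffices: this works precisely because the two relevant axes are lines and therefore always lie in a common plane, so no product of rotations is needed to match them. Two further points deserve attention. First, one must check that $A$ genuinely qualifies as axonal, i.e.\ that it carries the axis $M$ of $\{Re_i\}$ to the axis of $\{\hat f_i\}$; this is automatic, since the target was arranged to have axis $M$. Second, one should note that $D$ is diagonal with respect to the normalized image basis $\{\hat f_i\}$ rather than an orthonormal basis, and reconciling this with the paper's notion of a diagonal operator is the only place where the precise definitions must be matched. The degenerate case $n=1$ is immediate, with $R$ and $A$ reducing to $\pm\,\mathrm{id}$ and $D$ the scalar $\|f_1\|$.
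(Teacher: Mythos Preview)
Your argument is correct and follows essentially the same route as the paper: start from a unimodular basis, normalize its $T$--image, rotate the source axis onto the target axis with a single planar rotation $R$, define $A$ by $Re_i\mapsto \hat f_i$, and let $D$ rescale $\hat f_i\mapsto f_i$. The only cosmetic difference is that you begin with an orthonormal basis rather than an arbitrary equimodular one, and you spell out more carefully why normalization and rotation preserve the axis; the paper's proof asserts these points without elaboration.
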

\begin{proof}
Suppose $\{u_i\}_{i=1}^n$ is any equimodular basis of $V$.  Let $v_i = T(u_i)$ and  $w_i=\frac{v_i}{\|v_i\|}$ for all $i\in\{1,2,\ldots , n\}$. Since $T$ is invertible, $\{w_i\}_{i=1}^n$ is a unimodular basis of $V$.  Let $L_1$ be the axis of the basis $\{u_i\}_{i=1}^n$ and $L_2$ be the common axis of the bases $\{v_i\}_{i=1}^n$ and $\{w_i\}_{i=1}^n$. There exists a rotational operator $R$ of $V$ which maps $L_1$ to $L_2$. Clearly, every rotational operator is invertible and norm--preserving and hence $\{R(u_i)\}_{i=1}^n$ is a unimodular basis whose axis is $L_2$. Further the linear operator $A$  which maps  $R(u_i)$ to $w_i$ for all $i \in \{1,2,\ldots , n\}$ is axonal. Let $D$ be the diagonal operator on $V$ which maps $w_i$ to $v_i$. Since both $T$ and $D\circ A\circ R$ map the basis $\{u_i\}_{i=1}^n$ to the basis $\{v_i\}_{i=1}^n$  we can conclude that $T=D\circ A\circ R$.
\end{proof}
\begin{rem}
From Remark \ref{rem:axonalshear},  a further characterization of axonal transformations which maps an equimodular basis to another equimodular basis sharing the same associated cone would provide a finer description of invertible transformations on the lines of  Theorem \ref{thm:decomposelinear}.
\end{rem}
\section{Decomposition Of Conformal And Orthogonal Operators}\label{sec:decompose}
\begin{rem}
Recall that a linear $f:V\rightarrow V$ is said to be conformal if there exists a real $\lambda >0$ such that $\langle f(u), f(v) \rangle = \lambda  \langle u, v\rangle$ for all $u, v \in V$.
\end{rem}
\begin{rem}
A linear function  is conformal if and only if it is angle preserving.
\end{rem}
\begin{thm}\label{thm:decomposeconformal}

Given any conformal $T \in GL(V)$, we can write   $$T=D\circ \mathcal{R}\circ R_{n-2}\circ R_{n-3}\circ\dots \circ R_{2}\circ R_1,$$ where  $R_k$ is a rotational operator on $V$ for each $k\in\{1,2,\ldots, n-2\}$, $\mathcal{R}$ is either a rotational or a reflectional operator on $V$ and $D$ is a scalar operator on $V$.
\end{thm}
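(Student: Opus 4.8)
The plan is to peel off the conformal factor and the rotational part first, reducing the statement to a decomposition of an orthogonal operator, and then to decompose the orthogonal operator by an induction on dimension in which each step costs exactly one planar rotation. I would begin by invoking Theorem \ref{thm:decomposelinear} with the initial equimodular basis $\{u_i\}_{i=1}^n$ chosen to be orthonormal. Since $T$ is conformal with factor $\lambda$, the images $v_i=T(u_i)$ are mutually orthogonal with common norm $\sqrt{\lambda}$, so $\{w_i=v_i/\|v_i\|\}_{i=1}^n$ is again orthonormal. Hence the diagonal operator $D$ of that theorem, sending $w_i\mapsto v_i=\sqrt{\lambda}\,w_i$, has all its diagonal entries equal to $\sqrt{\lambda}$ and is therefore the scalar operator $\sqrt{\lambda}\cdot\mathrm{id}$; the axonal operator $A$, carrying the orthonormal basis $\{R(u_i)\}$ to the orthonormal basis $\{w_i\}$, is orthogonal; and $R$ is rotational. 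Writing $R=R_1$, it then suffices to express the orthogonal axonal operator $A$ as a product of $n-3$ planar rotations followed by a single rotational or reflectional operator $\mathcal{R}$.

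The engine is the following claim, proved by induction on $m=\dim V$: every orthogonal operator $O$ on an $m$--dimensional space with $m\ge 2$ can be written as $\mathcal{R}\circ S_{m-2}\circ\cdots\circ S_1$, where each $S_k$ is a planar rotation and $\mathcal{R}$ is rotational or reflectional. For $m=2$ this is immediate, since an orthogonal operator on a plane is itself a rotation or a reflection and no factor $S_k$ appears. For the inductive step, fix a unit vector $e$; because a single planar rotation can carry any unit vector to any other (rotating in the plane they span, or by angle $\pi$ when they are antipodal), I would choose a planar rotation $S_1$ with $S_1(O^{-1}(e))=e$. Then $O\circ S_1^{-1}$ fixes $e$, so it restricts to an orthogonal operator on the $(m-1)$--dimensional space $e^\perp$ and is the identity on $\mathrm{span}\{e\}$. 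Applying the inductive hypothesis to this restriction and extending each resulting factor by the identity on $\mathrm{span}\{e\}$, which turns them into genuine planar rotations and a rotational or reflectional operator of $V$, yields $O\circ S_1^{-1}=\mathcal{R}\circ S_{m-2}\circ\cdots\circ S_2$, and hence $O=\mathcal{R}\circ S_{m-2}\circ\cdots\circ S_1$ with exactly $m-2$ rotations.

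To assemble the theorem I would use the axonal hypothesis to carry out the first dimension reduction without spending an extra factor. Since $A$ is orthogonal and carries the orthonormal basis $\{R(u_i)\}$ with axis $\mathrm{span}\{\alpha\}$ to the orthonormal basis $\{w_i\}$ with the same axis, and since for an orthonormal basis the sum of its vectors is an axial--vector, the vectors $\sum_i R(u_i)$ and $\sum_i w_i=A(\sum_i R(u_i))$ are both axial--vectors and hence (by Theorem \ref{thm:existaxial}) scalar multiples of $\alpha$; as $A$ is norm--preserving this forces $A(\alpha)=\pm\alpha$. Thus $A$ preserves $\alpha^\perp$ and restricts there to an orthogonal operator on an $(n-1)$--dimensional space, which the claim decomposes into $n-3$ planar rotations and one operator $\mathcal{R}$. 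Extending these by the identity on $\mathrm{span}\{\alpha\}$ and composing with $R_1$ gives
\[
T=D\circ \mathcal{R}\circ R_{n-2}\circ R_{n-3}\circ\cdots\circ R_1,
\]
with the scalar $D=\sqrt{\lambda}\cdot\mathrm{id}$ pulled outermost because it commutes with everything.

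The hard part will not be any single deep step but the bookkeeping of two numerical and orientation subtleties. First, one must justify the exact count $n-2$ rather than $n-1$: the saving of one factor comes precisely from the shared axis (equivalently, from $R$ supplying the first rotation for free) together with the base case $m=2$ costing no rotation at all. Second, one must confirm that the lone operator $\mathcal{R}$ is rotational exactly when $\det A=+1$ and reflectional otherwise; since every planar rotation has determinant $+1$, the determinant is preserved through all the reductions down to the plane, where $\mathcal{R}$ inherits it. The sign case $A(\alpha)=-\alpha$ must be folded into this same orientation count, most cleanly by absorbing the $-1$ on $\mathrm{span}\{\alpha\}$ into the decomposition on $\alpha^\perp$ so that the total number of planar factors does not increase.
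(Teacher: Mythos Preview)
Your overall architecture matches the paper's: invoke Theorem~\ref{thm:decomposelinear} with an orthonormal basis, identify $D$ as scalar and $A$ as orthogonal, then use the shared axis to drop one dimension for free before inducting. Your argument that $D=\sqrt{\lambda}\cdot\mathrm{id}$ is in fact cleaner than the paper's, which deduces it indirectly from ``$D$ is conformal.'' The one place your proposal is genuinely incomplete is the sign case $A(\alpha)=-\alpha$: your suggestion to ``absorb the $-1$ on $\mathrm{span}\{\alpha\}$ into the decomposition on $\alpha^{\perp}$'' does not work as stated, because the resulting extra factor acts as $-1$ on $\mathrm{span}\{\alpha\}$ and as the identity on $\alpha^{\perp}$, and composing it with the planar $\mathcal{R}$ living inside $\alpha^{\perp}$ produces an operator that is nontrivial on a three--dimensional subspace---not a planar rotation or reflection. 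The paper's fix is simply to replace $D$ by $-D$ (still scalar) and $A$ by $-A$, after which $A(\alpha)=\alpha$ and the reduction goes through unchanged; you should adopt that.

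The substantive methodological difference is in the inductive engine. The paper inducts on the full statement of the theorem: at each stage it reapplies Theorem~\ref{thm:decomposelinear} to the orthogonal operator $T_2$ on $W^{\perp}$, finds a new axis there, and peels off one more rotation via the axonal structure. You instead prove a self--contained lemma for orthogonal operators by the classical device of choosing a unit vector $e$ and a planar rotation carrying $O^{-1}(e)$ to $e$. Your engine is shorter and entirely elementary, but it abandons the paper's axis/axonal machinery after the first step; the paper's version, by contrast, carries that geometric picture through every level of the induction, which is the article's stated purpose. Both yield the correct count of $n-2$ rotations plus one $\mathcal{R}$.
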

\begin{proof}
We induct on $n$ -- the dimension of $V$. For unidimensional $V$, every $T$ is a scalar operator. When dimension of $V$ is two, it is easily verified that every conformal $T$ is of the form $D\circ \mathcal{R}$ where $D$ is a scalar operator on $V$ and $\mathcal{R}$ is either a rotation or a reflection of $V$.

Assume next that $V$ has dimension at least three. By Theorem \ref{thm:decomposelinear}, we have $T=D\circ A_1 \circ R_1$ for linear operators $D, A_1$ and $R_1$ on $V$ such that $D$ is diagonal, $A_1$ is axonal and $R_1$ is rotational.
Suppose the axonal $A_1$ maps an equimodular basis $\{u_i\}_{i=1}^n$ to an equimodular basis $\{v_i\}_{i=1}^n$ which share a common axis. By rescaling $\{u_i\}_{i=1}^n$, we can assume that the basis $\{u_i\}_{i=1}^n$ is unimodular. By rescaling $D$, if necessary, we can assume that $\{v_i\}_{i=1}^n$ is unimodular. Assume that for some  real numbers $\lambda_i$ the diagonal operator $D$ maps $v_i$ to $\lambda_iv_i$ for each $i \in \{1,2,\ldots, n\}$.
The scalars $\{\lambda_i\}_{i=1}^n$ are non--zero as $T$ is invertible.

 Write  $A_1=D^{-1}\circ T\circ R_1^{-1}$ and in the latter composition $R_1^{-1}$  and $T$ are angle preserving. Now $T\circ R_1^{-1}$ maps $u_i$ to $\lambda_i v_i$ and hence   the angle between $u_i$ and $u_j$ equals the angle between $\lambda_i v_i$ and $\lambda_j v_j$ for $i,j\in\{1,2,\ldots, n\}$. Note that $D^{-1}$ being diagonal along $\{v_i\}_{i=1}^n$ keeps the angle between $\lambda_i v_i$ and $\lambda_j v_j$ equal to the angle between $v_i$ and $v_j$ for $i,j\in\{1,2,\ldots, n\}$. Consequently, if we take $\{u_i\}_{i=1}^n$ to be orthonormal, then $\{v_i\}_{i=1}^n$ is an orthonormal basis. We conclude that $A_1$ is orthogonal.

Now that $A_1$ and $R_1$ are conformal and $T$ is given to be conformal, we conclude that $D=T\circ R_1^{-1}\circ A_1^{-1}$ is conformal. Hence, $D$ is a scalar operator.

Suppose $\alpha$ is an axial--vector of the basis $\{u_i\}_{i=1}^n$. If the common angle between $\alpha$ and each of these basis vectors is $\theta$, then $A_1(\alpha)$ makes the same angle $\theta$ with each of the vectors from the basis $\{v_i\}_{i=1}^n$ as $A_1$ is orthogonal. Hence $A_1(\alpha)$ is an axial--vector for the basis $\{v_i\}_{i=1}^n$. However, these two bases share a common axis, say, $W$. Since $W=\mathrm{Span}\{\alpha\}=\mathrm{Span}\{A_1(\alpha)\}$ and $A_1$ is orthogonal,   $A_1(\alpha)=\pm \alpha$. By replacing $D$ by $-D$ if necessary, we assume $A_1(\alpha)=\alpha$ and hence $A_1$ is the identity on $W$. Since $A_1$ is orthogonal, $W^\perp$ is an invariant subspace of $A_1$. Thus we may write $A_1=id\oplus T_2$ where $id$ is the identity operator on $W$ and $T_2$ is an orthogonal operator on $W^\perp$ of dimension $n-1$.  We apply induction hypothesis to the orthogonal $T_2$ to realize this as a composition
$$ T_2= \tilde{D}_2\circ\tilde{\mathcal{R}}\circ \tilde{R}_{n-2}\circ \tilde{R}_{n-3}\circ\cdots \circ\tilde{R}_2,$$
where $\tilde{D}_2$ is scalar  while $\tilde{\mathcal{R}}$ is either  reflectional or  rotational and $\tilde{R}_2, \tilde{R}_3, \ldots , \tilde{R}_{n-2}$ are rotational operators on $W^\perp$. Since $T_2$ is orthogonal, $\tilde{D}_2$ is the identity on $W^\perp$.   We extend $\tilde{\mathcal{R}}, \tilde{R}_2, \tilde{R}_3, \ldots \tilde{R}_{n-2}$ respectively to $\mathcal{R}, R_2, R_3, \ldots R_{n-2}$ by declaring the latter operators to be identity on $W$. We now have\\
\hspace*{\stretch{1}} $T=D\circ \mathcal{R}\circ R_{n-2}\circ R_{n-3}\circ\dots \circ R_{2}\circ R_1$.\hspace{\stretch{1}}\qedhere
\end{proof}

\begin{thm}\label{thm:decomposeorthogonal}

Given any orthogonal $T \in GL(V)$, we can write   $$T= \mathcal{R}\circ R_{n-2}\circ R_{n-3}\circ\dots \circ R_{2}\circ R_1,$$ where  $R_k$ is a rotational operator on $V$ for each $k\in\{1,2,\ldots , n-2\}$ and $\mathcal{R}$ is either a rotational or a reflectional operator on $V$.
\end{thm}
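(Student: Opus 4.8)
My plan is to specialize Theorem~\ref{thm:decomposeconformal}. An orthogonal operator is conformal with conformal factor $\lambda=1$, so Theorem~\ref{thm:decomposeconformal} gives a scalar operator $D$, a rotational or reflectional operator $\mathcal{R}$, and rotational operators $R_1,\dots,R_{n-2}$ with $T=D\circ\mathcal{R}\circ R_{n-2}\circ\cdots\circ R_1$. The entire task is to show that $D$ may be taken to be the identity.

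The easy half is to see that $D=\pm I$. Solving for $D$ gives $D=T\circ R_1^{-1}\circ\cdots\circ R_{n-2}^{-1}\circ\mathcal{R}^{-1}$, a composition of the orthogonal $T$ with norm--preserving rotational and reflectional operators; hence $D$ is norm--preserving. A scalar operator $cI$ is norm--preserving only for $c=\pm1$, so $D=\pm I$, and if $D=I$ we are done.

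The crux is to rule out $D=-I$ when $n\ge3$ (the case $n\le2$ being immediate, an orthogonal operator on a line or plane already having the desired form). One cannot merely fold $-I$ into the remaining factors: on an $n$--dimensional space $-I$ is itself a product of $\lfloor n/2\rfloor$ planar rotations together with a reflection when $n$ is odd, which would exceed the permitted $n-1$ factors. Instead I would revisit the one free choice underlying Theorem~\ref{thm:decomposelinear}, namely the axis--aligning rotation $R_1$. Fix an equimodular basis $\{u_i\}_{i=1}^n$ with axial--vector $\alpha$ spanning the axis $L_1$. Because $T$ is orthogonal it preserves inner products, so $T\alpha$ is an axial--vector of the image basis $\{Tu_i\}_{i=1}^n$ and $T$ maps $L_1$ onto the axis $L_2=\mathrm{Span}\{\alpha\}$ of that basis. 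I would therefore take $R_1$ to be the planar rotation supported on $\mathrm{Span}\{T^{-1}\alpha,\alpha\}$ that carries the unit vector $T^{-1}\alpha$ to $\alpha$; this $R_1$ still sends $L_1$ to $L_2$, so $A:=T\circ R_1^{-1}$ remains orthogonal and axonal. Now $A(\alpha)=T R_1^{-1}(\alpha)=\alpha$, so no scalar correction is needed at all: $T=A\circ R_1$ with $D=\mathrm{id}$.

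It remains to decompose $A$. Since $A$ is orthogonal and fixes $\alpha$, it fixes the line $W=\mathrm{Span}\{\alpha\}$ pointwise and preserves $W^\perp$, so $A=\mathrm{id}_W\oplus T_2$ with $T_2$ orthogonal on the $(n-1)$--dimensional space $W^\perp$. Applying the induction hypothesis to $T_2$, extending each resulting operator by the identity on $W$, and composing with $R_1$ produces exactly $n-1$ factors of the required shape. Finally $\det T=\det A=\det T_2$ (as $\det R_1=1$), so the top operator is rotational precisely when $\det T=+1$ and reflectional precisely when $\det T=-1$; this is the asserted alternative for $\mathcal{R}$. The only delicate point in the whole argument is the forcing of $D=\mathrm{id}$, achieved above by the judicious choice of $R_1$.
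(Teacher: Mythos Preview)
Your argument is correct in substance and shares the paper's overall strategy---specialize Theorem~\ref{thm:decomposeconformal} and dispose of the scalar factor $D$---but you do considerably more than the paper does.  The paper's proof simply quotes Theorem~\ref{thm:decomposeconformal} and asserts that ``since $T$ is orthogonal, the scalar operator $D$ has to be the identity,'' without addressing the possibility $D=-I$ that you rightly flag as the crux.  You instead re-enter the inductive construction behind Theorems~\ref{thm:decomposelinear} and~\ref{thm:decomposeconformal} and pin down the axis-aligning rotation $R_1$ so that it carries a \emph{specific} unit axial vector to its $T$-image, not merely the line $L_1$ to the line $L_2$; this forces $A=T\circ R_1^{-1}$ to fix the axial vector on the nose, so the sign flip in the proof of Theorem~\ref{thm:decomposeconformal} is never triggered and $D=\mathrm{id}$ from the outset.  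In effect you supply a self-contained inductive proof of the orthogonal case, whereas the paper treats it as an immediate corollary; your version is longer but also more explicit about why the scalar factor disappears.

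Two small points to tidy up.  First, there is a notational slip: you introduce $\alpha$ as spanning $L_1$ and then write $L_2=\mathrm{Span}\{\alpha\}$; you mean $L_2=\mathrm{Span}\{T\alpha\}$, and the rotation $R_1$ should carry $\alpha$ to $T\alpha$ (equivalently, if $\beta$ is a unit generator of $L_2$, carry $T^{-1}\beta$ to $\beta$, which is how your computation $A(\beta)=TR_1^{-1}(\beta)=\beta$ reads once the symbols are straightened out).  Second, the supporting plane $\mathrm{Span}\{\alpha,T\alpha\}$ degenerates when $T\alpha=\pm\alpha$: if $T\alpha=\alpha$ take $R_1=\mathrm{id}$, and if $T\alpha=-\alpha$ take $R_1$ to be the rotation by $\pi$ in any two-plane containing $\alpha$ (available since $n\ge 2$).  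With these cosmetic fixes your proof is complete.
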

\begin{proof}
$T$ being orthogonal is conformal. From Theorem \ref{thm:decomposeconformal}, we may write
$$T=D\circ \mathcal{R}\circ R_{n-2}\circ R_{n-3}\circ\dots \circ R_{2}\circ R_1,$$ where  $R_k$ is a rotational operator on $V$ for each $k\in\{1,2,\ldots , n-2\}$, $\mathcal{R}$ is either a rotational or a reflectional operator on $V$ and $D$ is a scalar operator on $V$. Since $T$ is orthogonal, the scalar operator $D$ has to be the identity and hence \\
\hspace*{\stretch{1}}  $T= \mathcal{R}\circ R_{n-2}\circ R_{n-3}\circ\dots \circ R_{2}\circ R_1.$\hspace{\stretch{1}}  \qedhere\\

\end{proof}

\end{document}